\documentclass{amsart}

\usepackage{graphicx}%
\usepackage{multirow}%
\usepackage{amsmath,amssymb,amsfonts}%
\usepackage{amsthm}%
\usepackage{mathrsfs}%
\usepackage[title]{appendix}%
\usepackage{xcolor}%
\usepackage{textcomp}%
\usepackage{manyfoot}%
\usepackage{booktabs}%
\usepackage{algorithm}%
\usepackage{algorithmicx}%
\usepackage{algpseudocode}%
\usepackage{listings}%

\newtheorem{theorem}{Theorem}[section]
\newtheorem{corollary}[theorem]{Corollary}
\newtheorem{lemma}[theorem]{Lemma}
\newtheorem{proposition}[theorem]{Proposition}

\theoremstyle{definition}
\newtheorem{definition}[theorem]{Definition}
\theoremstyle{remark}
\newtheorem{remark}[theorem]{Remark}

\numberwithin{equation}{section}
\raggedbottom

\DeclareMathOperator{\card}{card}

\newcommand{\RR}{\mathbb{R}}


\begin{document}

\title[The existence of continuations for different types of metrics]{The existence of continuations for different types of metrics}

\author{Evgeniy Petrov}

\address{Institute of Applied Mathematics and Mechanics of the NAS of Ukraine, Batiuka str. 19, Slovyansk 84116, Ukraine}

\email{eugeniy.petrov@gmail.com}



\begin{abstract}The problems of continuation of a partially defined metric and a partially defined ultrametric were considered in~\cite{DMV13} and~\cite{DP13}, respectively.  Using the language of graph theory we generalize the criteria of existence of continuation obtained in these papers. For these purposes we use the concept of a triangle function introduced by M. Bessenyei and Z. P\'ales in~\cite{BP17}, which gives a generalization of the triangle inequality in metric spaces.
The obtained result allows us to get criteria of the existence of continuation for a wide class of semimetrics including not only metrics and ultrametrics, but also multiplicative metrics and semimetrics with power triangle inequality. Moreover, the explicit formula for the maximal continuations is also obtained.
\end{abstract}

\thanks{This work was partially supported by a grant from the Simons Foundation (Award 1160640, Presidential Dis\-cre\-ti\-ona\-ry-Ukraine Support Grants, E. Petrov).}

\keywords{metric space, ultrametric space, triangle function, power triangle inequality, weighted graph}

\subjclass[2020]{54E35, 05C38}

\maketitle

\section{Introduction}
Recall first some necessary definitions from the theory of metric spaces and graph theory. Let $X$ be a nonempty set.
A mapping  $d\colon X\times X\to \mathbb{R}^+$, $\mathbb{R}^+=[0,\infty)$ is a
\emph{metric} if for all $x,y,z \in X$ the following axioms hold:
\begin{itemize}
  \item [(i)] $(d(x,y)=0)\Leftrightarrow (x=y)$,
  \item [(ii)] $d(x,y)=d(y,x)$,
  \item [(iii)] $d(x,y)\leqslant d(x,z)+d(z,y)$.
\end{itemize}
The pair $(X,d)$ is called a \emph{metric space}.
If axiom (i) is replaced by a weaker condition
$d(x,x)=0$,
then $d$ is called a \emph{pseudometric}.
If axioms (i) and (ii) hold and condition (iii) is replaced by the inequality
\begin{itemize}
  \item [(iv)] $d(x,y)\leqslant \max\{d(x,z),d(z,y)\}$,
\end{itemize}
then $(X,d)$ is called an \emph{ultrametric space}. If additionally axiom (i) is replaced by the condition $d(x,x)=0$, then $d$ is called a \emph{pseudoultrametric}. If only axioms (i) and (ii) hold then $d$ is called a \emph{semimetric}. A pair $(X,d)$, where  $d$  is a semimetric on $X$, is called a \emph{semimetric space}.
In our paper we need a more general concept.
We shall say that $d$ is a \emph{pseudosemimetric} of only axiom (ii) and condition $d(x,x)=0$ hold. In this case the pair $(X,d)$ will be called a \emph{pseudosemimetric space}.

The fundamental concept of metric space was introduced by M. Fr\'{e}chet~\cite{Fr06} in 1906. Fr\'{e}chet called the discovered spaces ``classes (D)'' (from the word ``distance''). F. Hausdorff~\cite{Ha14} introduced the term ``metric space'' in 1914 considering these spaces as a special case of infinite topological spaces. Semimetric spaces were first examined by Fr\'{e}chet in~\cite{Fr06}, where he called them ``classes (E)''. Later these spaces attracted the attention of many mathematicians.
The ultrametric inequality was formulated by F.~Hausdorff in 1934 and ultrametric spaces were introduced by M. Krasner~\cite{Kr44} in 1944.

In 2017 M. Bessenyei and Z. P\'ales~\cite{BP17} introduced a definition of a triangle function $\Phi \colon \overline{\RR}_+^2\to \overline{\RR}^+$ for a semimetric $d$.
We use this definition in a slightly different form restricting the domain and the range of $\Phi$ by ${\RR}_+^2$ and ${\RR}^+$, respectively, and admitting $d$ to be a pseudosemimetric. Recall that a symmetry of $\Phi$ as usual means $\Phi(x,y)=\Phi(y,x)$.

\begin{definition}\label{d2}
Consider a pseudosemimetric space $(X, d)$. We shall say that $\Phi \colon {\RR}^+\times{\RR}^+ \to {\RR}^+$ is a \emph{triangle function} for $d$ if $\Phi$ is symmetric and monotone increasing in both of its arguments, satisfies $\Phi(0,0)=0$ and, for all $x, y, z \in X$, the generalized triangle inequality
\begin{equation}\label{ti}
d(x,y)\leqslant \Phi(d(x,z), d(y,z))
\end{equation}
holds. We also shall say that $d$ is a $\Phi$-pseudosemimetric if $\Phi$ is a triangle function for $d$.
\end{definition}

In~\cite{BP17} those semimetric spaces whose so-called basic triangle functions are continuous at the origin were considered. These spaces were termed regular. It was shown that the topology of a regular semimetric space is Hausdorff, that a convergent sequence in a regular semimetric space has a unique limit and possesses the Cauchy property, etc. See, e.g.,~\cite{CJT18,JT20,VH17} for some new results in this direction.

A graph $G$ is an ordered pair $(V,E)$ consisting of a set $V=V(G)$ of {\it vertices} and a set  $E=E(G)$ of {\it edges}. In this paper we study the {\it simple} graphs which are finite, $\operatorname{card}(V)<\infty$, or infinite, $\card(V)=\infty$. Since our graph $G$ is simple we can identify $E(G)$ with a set of two-element subsets of $V(G)$, so that each edge is an unordered pair of distinct vertices. As usual we suppose that $V(G)\cap E(G)=\varnothing$. The edge $e=\{u,v\}$ is said to {\it join} $u$ and $v$, and the vertices $u$ and $v$ are called {\it adjacent} in $G$. The graph $G$ is {\it empty} if no two vertices are adjacent, i.e. if $E(G)=\varnothing$. A graph $G$ is \emph{connected} if any two distinct vertices of $G$ can be joined by a path.
We use the standard definitions of the {\it path}, the {\it cycle}, the {\it subgraph} and {\it supergraph}, see, for example,~\cite[p.~4, p.~40]{BM}. Note only that all paths and cycles are finite and simple graphs.

The following, basic for us, notion is a {\it weighted graph} $(G,w)$, i.e., a graph $G=(V,E)$ together with a weight $w\colon E(G)\to\mathbb R^+$ where $\mathbb{R}^+=[0,\infty)$. The problem of continuation of a weight $w\colon E\to\mathbb R^+$ defined on the set of edges $E(G)$ of the graph $G=(V,E)$ to a pseudometric was considered in~\cite{DMV13}. In particular, it was found a set of necessary and sufficient conditions under which the weight $w$ can be extended to a pseudometric $d\colon V\times V\to\mathbb R^+$. The set of all such extensions can be partially ordered. It was shown that the shortest-path pseudometric $d_w$ is the greatest element of this set and that the least continuation exists if and only if $G$ is complete $k$-partite. The analogous problem of continuation of a weight to an ultrametric was considered in~\cite{DP13}. Moreover, in~\cite{DP13} the question of uniqueness of such continuation was studied. The uniqueness criterion for continuation of a weight to a pseudometric was found in~\cite{P22}. Note that a problem of continuation of a weight to a metric can be reformulated as a problem of continuation of a partially defined metric. In such setting, some cases of this problem were considered earlier. For example, the free amalgamation property for finite metric spaces states that there always exists a metric on the union $X\cup Y$ of finite metric spaces $(X,d_1)$, $(Y,d_2)$ which agrees with $d_1$ on $X$ and $d_2$ on $Y$, if $d_1=d_2$ for elements of $Z$ where $Z=X\cap Y$, see~\cite{B,AMI}. Further results connected with extensions of metrics and pseudometrics can be found in~\cite{H,TZ04,To72,Bi47,Be93,BSTZ17,BB00}.

Considering the criteria of continuation of a weight to pseudometric~\cite[Proposition 2.1]{DMV13} and to pseudoultrametric~\cite[Theorem 2]{DP13} and taking into consideration the concept of triangle function we naturally state a problem of generalization of these criteria. In other words we are interested in the following question. Let $(G,w)$ be a weighted graph. Does there exist a $\Phi$-pseudosemimetric $d\colon V(G)\times V(G)\to\RR^+$ such that the given weight $w\colon E(G)\to\mathbb R^+$ has a continuation to $d$? I.e., the equality
\begin{equation}\label{e2}
w(\{u,v\})=d(u,v)
\end{equation}
holds for all $\{u,v\}\in E(G)$. If such a continuation exists, then we say that $w$ is a {\it $\Phi$-pseudosemimetrizable weight}.

\section{Triangle functions}\label{TF}

Let $n\in \mathbb N$. For every triangle function $\Phi$ consider a function $\Phi^*\colon \mathbb R_{+}^{n}\to \mathbb R^{+}$ of $n$ variables, defined as
\begin{equation}\label{comp}
\Phi^*(x_1,...,x_n) =
 \begin{cases}
 x_1, &\text{if } n=1, \\
 \Phi(x_1,x_2), &\text{if } n=2, \\
 \Phi(x_1,\Phi(x_2,\Phi(x_3,... \Phi(x_{n-2},\Phi(x_{n-1},x_n))))), &\text{if } n\geqslant 3.
 \end{cases}
\end{equation}
It is clear that $\Phi^*$ is monotone increasing in all of its variables as well as $\Phi$.

Everywhere below we consider that the triangle function $\Phi$ satisfies the following three conditions.

(1) The equality
\begin{equation}\label{mc}
 \Phi(x,\Phi(y,z))=\Phi(z,\Phi(x,y)).
\end{equation}

(2)
The inequality
\begin{equation}\label{pr2}
  \Phi^*(a_1,...,a_n)\leqslant \Phi(\Phi^*(b_1,...,b_p), \Phi^*(c_1,...,c_q)),
\end{equation}
where $A=[a_1,...,a_n]$, $B=[b_1,...,b_p]$, $A=[c_1,...,c_q]$, $n, p, q\geqslant 1$, are multisets of nonnegative real numbers (i.e., the equality $a_i=a_j$, ($b_i=b_j$, $c_i=c_j$) is possible for $i\neq j$) and  $A\subseteq B\cup C$.

(3) The inequality
\begin{equation}\label{pr3}
  \Phi(a,b)\geqslant \max\{a,b\}.
\end{equation}

One can easily verify that the following functions satisfy the above mentioned conditions (1)--(3) and Definition~\ref{d2} of triangle functions:
\begin{itemize}
  \item [(i)] $\Phi(x,y)=x+y$,
  \item [(ii)] $\Phi(x,y)=\max \{x,y\}$,
  \item [(iii)] $\Phi(x,y)=x\cdot y$, in the case if $x,y\geqslant 1$,
  \item [(iv)] $\Phi(x,y)=(x^p+y^p)^{\frac{1}{p}}$, $p>0$,
  \item [(v)] $\Phi(x,y)=\varphi^{-1}(\varphi(x)+\varphi(y))$, where $\varphi\colon [0,\infty)\to [0,\infty)$ is a homeomorphism,
\end{itemize}
i.e., $\varphi(0)=0$, $\varphi$ is continuous, unbounded and strictly increasing.

Clearly, we obtain in (v) triangle function (iv) if $\varphi(x)=x^p$, $p>0$, and triangle function (i) if additionally $p=1$. It is interesting to note that a classical result of Acz\'{e}l~\cite[p.~107]{Ac87} ensures that~(\ref{mc}) together with additional conditions of continuity and cancellativity imposed on $\Phi$ guaranties that $\Phi(x,y)$ has form (v). The paper by Craigen and P\'{a}les~\cite{CP89} presents an alternative approach to this remarkable result. Note also that triangle function (ii) is not cancellative, i.e., the equality $\max\{x,y\}=\max\{z,y\}$ does not imply the equality $x=z$.

Obviously, metric spaces are semimetric spaces with triangle function (i) and ultrametric spaces are metric spaces with triangle function (ii). A power metric spaces were considered in~\cite{Gr16}. In these spaces  a power triangle function has a little more general form than (iv).

The notion of multiplicative metric space was introduced in~\cite{BKO08}. The main idea was that the usual triangle inequality was replaced by a multiplicative triangle inequality as follows.
Let $X$ be a nonempty set and $d\colon X\times X \to [1,\infty)$. We say that $(X,d)$ is a multiplicative metric space if for all $x,y,z\in X$ we have:
\begin{itemize}
  \item [$(i_1)$] $d(x,y)\geqslant 1$ and $x=y$ if and only if $d(x,y)=1$,
  \item [$(i_2)$] $d(x,y)=d(y,x)$,
  \item [$(i_3)$] $d(x,z)\leqslant d(x,y)d(y,z)$.
\end{itemize}
If condition $(i_1)$ is replaced by the weaker condition
\begin{itemize}
  \item [$(i_4)$] $d(x,x) = 1$,
\end{itemize}
then we say that $(X,d)$ is a pseudomultiplicative metric space.

\begin{remark}
Formally a pseudomultiplicative metric space is not a pseudosemimetric space with the triangle function $\Phi(x,y)=xy$. The reason is that condition $(i_4)$ contradicts to the axiom $d(x,x)=0$ of pseudosemimetric spaces. Nevertheless, analyzing the results of the paper we see that this fact does not violate the correctness of considerations related to continuation of a weight to a pseudomultiplicative metric.
\end{remark}

\begin{proposition}
Triangle function \emph{(v)} satisfies conditions \emph{(1)--(3)}.
\end{proposition}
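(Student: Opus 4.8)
The plan is to first reduce everything to a single closed-form expression for the iterated function $\Phi^*$. I claim that for triangle function (v) one has
\begin{equation*}
\Phi^*(x_1,\dots,x_n)=\varphi^{-1}\!\left(\varphi(x_1)+\dots+\varphi(x_n)\right)
\end{equation*}
for every $n\geqslant 1$. I would prove this by induction on $n$ using the recursion~(\ref{comp}): the cases $n=1,2$ are immediate, and for the inductive step one writes $\Phi^*(x_1,\dots,x_n)=\Phi(x_1,\Phi^*(x_2,\dots,x_n))$, substitutes the inductive hypothesis, and uses the identity $\varphi(\varphi^{-1}(t))=t$, which holds because $\varphi$ is a homeomorphism of $[0,\infty)$ onto itself. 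Once this formula is available, conditions (1)--(3) become statements about ordinary addition transported through the order-preserving bijections $\varphi$ and $\varphi^{-1}$.

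For condition (1), I would simply evaluate both sides with the $n=3$ case of the formula. Both $\Phi(x,\Phi(y,z))$ and $\Phi(z,\Phi(x,y))$ equal $\varphi^{-1}(\varphi(x)+\varphi(y)+\varphi(z))$, so the equality~(\ref{mc}) follows at once from the commutativity and associativity of addition.

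For condition (2), I would rewrite both sides of~(\ref{pr2}) via the closed form, so that the left-hand side becomes $\varphi^{-1}\bigl(\sum_{i=1}^{n}\varphi(a_i)\bigr)$ and the right-hand side becomes $\varphi^{-1}\bigl(\sum_{i=1}^{p}\varphi(b_i)+\sum_{j=1}^{q}\varphi(c_j)\bigr)$. Since $\varphi^{-1}$ is strictly increasing, it suffices to compare the arguments. The multiset inclusion $A\subseteq B\cup C$ means that each $a_i$, counted with multiplicity, also occurs in $B\cup C$; as $\varphi(t)\geqslant 0$ for all $t\geqslant 0$ (because $\varphi(0)=0$ and $\varphi$ is increasing), discarding the surplus terms only decreases the total, giving $\sum_{i}\varphi(a_i)\leqslant\sum_{i}\varphi(b_i)+\sum_{j}\varphi(c_j)$. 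Applying $\varphi^{-1}$ yields~(\ref{pr2}). This multiset bookkeeping is the only place requiring any care, and it is the step I expect to be the mild main obstacle.

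Finally, for condition (3) I would use $\varphi(b)\geqslant 0$ to get $\varphi(a)+\varphi(b)\geqslant\varphi(a)$; applying the increasing map $\varphi^{-1}$ together with $\varphi^{-1}(\varphi(a))=a$ gives $\Phi(a,b)\geqslant a$, and by symmetry $\Phi(a,b)\geqslant b$, whence $\Phi(a,b)\geqslant\max\{a,b\}$, which is~(\ref{pr3}).
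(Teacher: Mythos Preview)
Your proposal is correct and follows essentially the same route as the paper: both derive the closed form $\Phi^*(x_1,\dots,x_n)=\varphi^{-1}\bigl(\sum_i \varphi(x_i)\bigr)$ and then reduce (1)--(3) to elementary properties of addition together with the monotonicity of $\varphi$ and $\varphi^{-1}$. The only cosmetic difference is in (3), where the paper phrases the argument via $\Phi(a,0)=a$ and monotonicity rather than via $\varphi(b)\geqslant 0$; these are the same computation.
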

\begin{proof}
(1)
$$ \Phi(x,\Phi(y,z)) =\varphi^{-1}(\varphi(x)+\varphi(\varphi^{-1}(\varphi(y)+\varphi(z))))
$$
$$
= \varphi^{-1}(\varphi(x)+\varphi(y)+\varphi(z))
=\Phi(z,\Phi(x,y)).
$$

(2) One can easily see that
$$
\Phi^*(a_1,...,a_n)= \varphi^{-1}(\varphi(a_1)+\cdots + \varphi(a_n))
$$
and
$$
\Phi(\Phi^*(b_1,...,b_p), \Phi^*(c_1,...,c_q))=
\varphi^{-1}(\varphi(b_1)+\cdots + \varphi(b_p)+\varphi(c_1)+\cdots + \varphi(c_q)).
$$
Using the inclusion $A\subseteq B\cup C$ and the fact that $\varphi$ and $\varphi^{-1}$ are strictly increasing we obtain inequality~(\ref{pr2}).

(3) Without loss of generality, consider that $\max\{a,b\}=a$. Using the evident equality $\Phi(a,0)=a$ and the fact that $\varphi$ and $\varphi^{-1}$ are strictly increasing we obtain inequality~(\ref{pr3}).
\end{proof}

Below we also need the following.
\begin{proposition}\label{p12}
Let $\Phi \colon {\RR}^+\!\times\!{\RR}^+ \!\to\!{\RR}^+$ be a symmetric function of two variables satisfying equality~(\ref{mc})
and let $x_1,...,x_n$, $n\geqslant 1$, be nonnegative real numbers.
Then
\begin{equation}\label{e111}
\Phi^*(x_1,...,x_n) = \Phi^*(\bar x_1,...,\bar x_n),
\end{equation}
where $(\bar x_1,...,\bar x_n)$ is any permutation of $(x_1,...,x_n)$.
\end{proposition}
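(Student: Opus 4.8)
The plan is to prove~(\ref{e111}) by induction on $n$, regarding $\Phi$ as a commutative (by symmetry) binary operation and reducing the claim to a generating set of the symmetric group $S_n$. The cases $n=1$ and $n=2$ are immediate: for $n=1$ there is nothing to permute, and for $n=2$ the identity $\Phi^*(x_1,x_2)=\Phi^*(x_2,x_1)$ is just the symmetry $\Phi(x_1,x_2)=\Phi(x_2,x_1)$. So I would assume $n\geqslant 3$ and that the statement already holds for every tuple of length $n-1$.

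The inductive step rests on two elementary moves. First, unfolding the definition~(\ref{comp}) gives the recursion $\Phi^*(x_1,\dots,x_n)=\Phi(x_1,\Phi^*(x_2,\dots,x_n))$, so by the induction hypothesis applied to $\Phi^*(x_2,\dots,x_n)$ the value is unchanged under any permutation of $x_2,\dots,x_n$ while the first entry stays fixed. Second, I claim the first two entries may be swapped. Writing $R=\Phi^*(x_3,\dots,x_n)$ and applying the recursion twice, $\Phi^*(x_1,x_2,\dots,x_n)=\Phi(x_1,\Phi(x_2,R))$; now~(\ref{mc}) yields $\Phi(x_1,\Phi(x_2,R))=\Phi(R,\Phi(x_1,x_2))$, and likewise $\Phi(x_2,\Phi(x_1,R))=\Phi(R,\Phi(x_2,x_1))$. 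Since $\Phi(x_1,x_2)=\Phi(x_2,x_1)$ by symmetry, the two right-hand sides coincide, giving $\Phi^*(x_1,x_2,x_3,\dots,x_n)=\Phi^*(x_2,x_1,x_3,\dots,x_n)$.

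It then remains to observe that these two moves generate all of $S_n$: the permutations fixing the first position (available through the first move) together with the transposition of positions $1$ and $2$ (the second move) already produce every transposition $(1\,j)$, since $(1\,j)=(2\,j)(1\,2)(2\,j)$ with $(2\,j)$ fixing the leading slot, and such transpositions generate $S_n$. Hence $\Phi^*$ is invariant under an arbitrary permutation, which is exactly~(\ref{e111}). The only genuinely delicate points are organizational rather than conceptual: keeping the edge cases ($n\leqslant 2$, and the reduction when a permutation moves $x_1$ out of the leading position) straight, and spelling out that the two stated moves really do exhaust $S_n$. I note in passing that associativity of $\Phi$, namely $\Phi(\Phi(x,y),z)=\Phi(x,\Phi(y,z))$, also follows at once from~(\ref{mc}) and symmetry and would allow one to phrase the argument through a regrouping lemma instead, but the direct computation above makes such a detour unnecessary.
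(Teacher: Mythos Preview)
Your argument is correct. Both your proof and the paper's hinge on the same algebraic identity, namely that symmetry together with~(\ref{mc}) gives $\Phi(a,\Phi(b,c))=\Phi(b,\Phi(a,c))$, which lets one swap two adjacent entries in the nested expression for $\Phi^*$. The difference is purely organizational: the paper applies this swap directly at an arbitrary depth $i$ to establish invariance under every adjacent transposition $(i\ i{+}1)$, and then quotes the fact that adjacent transpositions generate $S_n$. You instead set up an induction on $n$, using the recursion $\Phi^*(x_1,\dots,x_n)=\Phi(x_1,\Phi^*(x_2,\dots,x_n))$ so that the inductive hypothesis supplies the stabilizer of position~$1$, and then add the single swap $(1\ 2)$; your generating-set argument $(1\ j)=(2\ j)(1\ 2)(2\ j)$ is a fine way to close this. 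The paper's route is slightly shorter since it avoids the induction scaffolding, while yours makes the recursive structure of $\Phi^*$ more explicit; neither approach requires anything the other does not.
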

\begin{proof}
It is well known that any permutation can be expressed as a composition  of transpositions. Moreover, any  transposition can be expressed as a composition of consecutive transpositions. Thus, to prove~(\ref{e111}) it suffices to show that
$$\Phi^*(x_1,...,x_i,x_{i+1},....,x_n)=\Phi^*(x_1,...,x_{i+1},x_{i},....,x_n)$$
for any $i=1,...,n-1$. Indeed, by symmetry of $\Phi$ and by~(\ref{mc}) we have
$$
 \Phi(x,\Phi(y,z))=\Phi(x,\Phi(z,y))=\Phi(y,\Phi(x,z)).
$$
Hence,
$$\Phi^*(x_1,...,x_i,x_{i+1},....,x_n)
= \Phi(x_1,...,\Phi(x_i,\Phi(x_{i+1},... \Phi(x_{n-2},\Phi(x_{n-1},x_n))))))$$
$$
= \Phi(x_1,...,\Phi(x_{i+1},\Phi(x_{i},... \Phi(x_{n-2},\Phi(x_{n-1},x_n))))))=
\Phi^*(x_1,...,x_{i+1},x_{i},....,x_n),
$$
which completes the proof.
\end{proof}

\section{Subdominant $\Phi$-pseudosemimetric}
In this section we introduce a concept of subdominant $\Phi$-pseudosemimetric, which is a generalization of such concepts as
\emph{shortest-path pseudometric}~\cite{DMV13} and  \emph{subdominant pseudoultrametric}~\cite{DP13}. Note that these two types of metrics play an important role in the problem of continuation of a weight to pseudometric and to pseudoultrametric, respectively.

Similarly to~\cite[p.~6]{DMV13} and to~\cite[p.~1134]{DP13} we give the following definitions. On the set of $\Phi$-pseudosemimetrics defined on $X$ we introduce the  partial order $\preceq$ as
\begin{equation}\label{eq2.02}
(d_1\preceq d_2) \  \text{ if and only if } \ (d_1(x,y)\leqslant d_2(x,y) \, \text{ for all  } \, x,y \in X).
\end{equation}

\begin{definition}\label{def2.3*}
Let $(G,w)$ be a nonempty weighted graph and $\mathfrak{F}_{\Phi}^w$ be the family of all $\Phi$-pseudosemimetrics $\rho\colon V(G)\times V(G)\to \RR^+$ such that
$$
\rho(u,v)\leqslant w(\{u,v\})
$$
for every edge $\{u,v\}\in E(G)$. If the poset $(\mathfrak{F}_{\Phi}^w,\preceq)$ contains the greatest element, then we call this element the \emph{subdominant $\Phi$-pseudosemimetric} for  $w$. Note that $\mathfrak{F}_{\Phi}^w \neq \varnothing$ because the zero pseudosemimetric $\rho(u,v)=0$, $u,v \in V(G)$, belongs to $\mathfrak{F}_{\Phi}^w$.
\end{definition}

Let $(G,w)$ be a weighted graph and let $u,v$ be vertices belonging to a connected component of $G$. Let us denote by $\mathcal P_{u,v}=\mathcal P_{u,v}(G)$ the set of all paths joining  $u$ and
$v$ in $G$. For the path $P\in \mathcal P_{u,v}$ define the \emph{$\Phi$-weight} of this path by
\begin{equation}\label{e31}
w_{\Phi}(P)=
\begin{cases}
0,  &\text{if } E(P)=\varnothing,\\
\Phi^*(w(e_1),...,w(e_n)), &\text{otherwise},
\end{cases}
\end{equation}
where $\Phi^*$ is defined by~(\ref{comp}) and $e_1,...,e_n$ are all edges of the path $P$.  By Proposition~\ref{p12} the function $w_{\Phi}$ is well-defined since it does not depend on the direction of the path. Write
\begin{equation}\label{e3}
d_{\Phi}^w(u,v)=\inf\{w_{\Phi}(P)\colon P\in\mathcal P_{u,v}\}.
\end{equation}
In the case $\Phi(x,y)=x+y$ for the connected graph $G$  the function $d^w_{\Phi}$ is a {shortest-path pseudometric} on the set $V(G)$ and in the case $\Phi(x,y)=\max\{x,y\}$ it is a subdominant pseudoultrametric.

In the next lemma and further we identify a pseudosemimetric space $(X,d)$ with the complete weighted graph $(G, w_d)=(G(X), w_d)$  having $V(G)=X$ and satisfying the equality
\begin{equation}\label{eq2.4}
	w_d(\{x,y\})=d(x,y)
\end{equation}
for every pair of different points $x,y \in X$.
\begin{lemma}\label{l1}
Let $(X,d)$ be a pseudosemimetric space with the triangle function $\Phi$. Then for every cycle $C\subseteq G(X)$ and for every $e\in E(C)$ the inequality
\begin{equation}\label{eq2.5}
	w_d(e) \leqslant w_{\Phi}(C\!\setminus\!e)
\end{equation}
holds, where $C\!\setminus \!e$ is a path obtained from the cycle $C$ by the removal of the edge $e$.
\end{lemma}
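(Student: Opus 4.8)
The plan is to prove the inequality by induction on the number of edges of the path $P=C\setminus e$. Writing the endpoints of $e$ as $u$ and $v$, so that $w_d(e)=d(u,v)$ by~(\ref{eq2.4}), and listing the vertices of $P$ as $u=x_0,x_1,\dots,x_n=v$ with consecutive edges $\{x_{i-1},x_i\}$, the assertion~(\ref{eq2.5}) becomes
$$d(x_0,x_n)\leqslant \Phi^*\bigl(d(x_0,x_1),\dots,d(x_{n-1},x_n)\bigr).$$
Since $C$ is a cycle in the simple graph $G(X)$, it has at least three edges, so $P$ has $n\geqslant 2$ edges; this fixes both the base case and the range of the induction.

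Before the induction I would record one algebraic fact. Combining the symmetry of $\Phi$ with~(\ref{mc}) gives, for all $x,y,z\in\RR^+$,
$$\Phi(x,\Phi(y,z))=\Phi(z,\Phi(x,y))=\Phi(\Phi(x,y),z),$$
so $\Phi$ is associative; together with symmetry this makes the iterated composition $\Phi^*$ from~(\ref{comp}) independent of the bracketing, while Proposition~\ref{p12} makes it independent of the order of its arguments. In particular I will use the regrouping identity
$$\Phi\bigl(\Phi^*(y_1,\dots,y_{m-1}),\,y_m\bigr)=\Phi^*(y_1,\dots,y_m),$$
which follows from associativity applied to the right-associated product defining $\Phi^*$ in~(\ref{comp}).

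For the base case $n=2$ the inequality reads $d(x_0,x_2)\leqslant\Phi(d(x_0,x_1),d(x_1,x_2))$, which is exactly the generalized triangle inequality~(\ref{ti}) of Definition~\ref{d2} with $z=x_1$. For the inductive step, assume the claim for all such paths with fewer than $n$ edges, where $n\geqslant 3$. Applying~(\ref{ti}) with $z=x_{n-1}$ yields $d(x_0,x_n)\leqslant\Phi(d(x_0,x_{n-1}),d(x_{n-1},x_n))$. The subpath $x_0,x_1,\dots,x_{n-1}$ has $n-1$ edges, so the induction hypothesis gives $d(x_0,x_{n-1})\leqslant\Phi^*\bigl(d(x_0,x_1),\dots,d(x_{n-2},x_{n-1})\bigr)$. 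Using that $\Phi$ is monotone increasing in its first argument, and then the regrouping identity, I conclude
$$d(x_0,x_n)\leqslant \Phi\bigl(\Phi^*(d(x_0,x_1),\dots,d(x_{n-2},x_{n-1})),\,d(x_{n-1},x_n)\bigr)=\Phi^*\bigl(d(x_0,x_1),\dots,d(x_{n-1},x_n)\bigr),$$
which is the value $w_{\Phi}(P)$ from~(\ref{e31}), closing the induction.

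The only nonroutine point is the associativity/regrouping step: without it one is left with a differently bracketed composition than the one defining $w_{\Phi}$, and the bound cannot be identified with $w_{\Phi}(P)$. I expect this to be the main, though short, obstacle. Note in particular that condition~(2), inequality~(\ref{pr2}), points in the opposite direction and so does not by itself close the gap; it is the \emph{equality} coming from associativity of $\Phi$ that is needed here, while conditions~(\ref{pr2}) and~(\ref{pr3}) play no role in this lemma.
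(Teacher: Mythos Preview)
Your proof is correct and follows essentially the same approach as the paper: iterate the generalized triangle inequality~(\ref{ti}) along the path $C\setminus e$ and use monotonicity of $\Phi$ to nest the resulting bounds into a $\Phi^*$ expression. The one cosmetic difference is the direction of iteration: the paper peels off the \emph{first} edge at each step, keeping the diagonal term $d(x_1,x_k)$ in the innermost slot so that the right-nested expression matches the definition~(\ref{comp}) of $\Phi^*$ directly, whereas you peel off the last edge and therefore need the associativity/regrouping identity---which you correctly derive from symmetry and~(\ref{mc}), but which the paper's ordering avoids entirely.
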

\begin{proof}
Let $V(C)=\{x_1,...,x_n\}$. Without loss of generality, consider that $e=\{x_1,x_2\}$. By~(\ref{ti}) we have
$$
w_d(e)=d(x_1,x_2)\leqslant \Phi(d(x_2,x_3),d(x_1,x_3)),
$$
$$
d(x_1,x_3)\leqslant \Phi(d(x_3,x_4),d(x_1,x_4)),
$$
$$
...
$$
$$
d(x_1,x_{n-1})\leqslant \Phi(d(x_{n-1},x_n),d(x_1,x_n)).
$$
Since $\Phi$ is strictly increasing in each of its arguments, we obtain
$$
w_d(e)\leqslant \Phi(d(x_2,x_3),\Phi(d(x_3,x_4),...,\Phi(d(x_{n-1},x_n),d(x_1,x_n))))
$$
$$
=\Phi^*(d(x_2,x_3),d(x_3,x_4),...,d(x_{n-1},x_n), d(x_1,x_n)).
$$
Clearly, $\{x_2,x_3\}$, ... ,$\{x_1,x_{n}\}$ are the edges of the path $C\!\setminus\!e$. Hence, by~(\ref{eq2.4}) and~(\ref{e31}) we obtain inequality~(\ref{eq2.5}).
\end{proof}

The proof of the following theorem is based on the proof of Theorem~1 from~\cite{DP13}. The main difference is that we use here the special properties of a triangle function defined by~(\ref{pr2}) and~(\ref{pr3}).

\begin{theorem}\label{st2.1}
{Let $\Phi$ be a continuous in both variables triangle function.} Then the function $d^{w}_{\Phi}$ is the subdominant $\Phi$-pseudosemimetric for every nonempty connected weighted graph $(G,w)$.
\end{theorem}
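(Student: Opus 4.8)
The plan is to verify the two defining properties of the greatest element of $(\mathfrak{F}_{\Phi}^w,\preceq)$: first, that $d^{w}_{\Phi}$ itself belongs to $\mathfrak{F}_{\Phi}^w$, and second, that it dominates every member of this family.

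First I would check membership. Since $G$ is connected, $\mathcal P_{u,v}\neq\varnothing$ for all $u,v$, so $d^{w}_{\Phi}$ is a well-defined nonnegative function by~(\ref{e3}). The equality $d^{w}_{\Phi}(x,x)=0$ holds because the trivial (edgeless) path contributes $0$ to the infimum, and the symmetry $d^{w}_{\Phi}(x,y)=d^{w}_{\Phi}(y,x)$ follows from reversing paths together with Proposition~\ref{p12}, which guarantees that $w_{\Phi}$ is direction-independent. The edge bound $d^{w}_{\Phi}(u,v)\leqslant w(\{u,v\})$ is immediate by testing the infimum on the single-edge path, for which $w_{\Phi}=\Phi^*(w(\{u,v\}))=w(\{u,v\})$.

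The substantial part of membership, and the step I expect to be the main obstacle, is the generalized triangle inequality~(\ref{ti}) for $d^{w}_{\Phi}$. Given $x,y,z$ and $\delta>0$, I would pick near-optimal paths $P_1\in\mathcal P_{x,z}$ and $P_2\in\mathcal P_{z,y}$ whose $\Phi$-weights lie within $\delta$ of $d^{w}_{\Phi}(x,z)$ and $d^{w}_{\Phi}(z,y)$, concatenate them into a walk from $x$ to $y$, and reduce this walk to a simple path $P\in\mathcal P_{x,y}$. The multiset of edge weights of $P$ is contained in the union of those of $P_1$ and $P_2$, so condition~(\ref{pr2}) gives $w_{\Phi}(P)\leqslant\Phi(w_{\Phi}(P_1),w_{\Phi}(P_2))$. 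Combining this with the choice of $P_1,P_2$ and the monotonicity of $\Phi$ yields $d^{w}_{\Phi}(x,y)\leqslant\Phi(d^{w}_{\Phi}(x,z)+\delta,\,d^{w}_{\Phi}(z,y)+\delta)$, and here the continuity of $\Phi$ is exactly what lets me send $\delta\to 0$ to obtain $d^{w}_{\Phi}(x,y)\leqslant\Phi(d^{w}_{\Phi}(x,z),d^{w}_{\Phi}(y,z))$, using symmetry. The degenerate cases $z=x$ or $z=y$, where $P_1$ or $P_2$ is edgeless and~(\ref{pr2}) (which requires nonempty multisets) does not apply, I would treat separately by invoking~(\ref{pr3}) in the form $\Phi(0,t)\geqslant t$.

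Finally, for maximality, let $\rho\in\mathfrak{F}_{\Phi}^w$ be arbitrary and fix $u,v$. For any path $P\in\mathcal P_{u,v}$ with edges $e_1,\ldots,e_n$, iterating the generalized triangle inequality for $\rho$ along $P$, which is precisely the computation carried out in Lemma~\ref{l1} applied to the cycle formed by adjoining the edge $\{u,v\}$ to $P$ in the complete graph $G(V)$ (with the single-edge path handled directly via the defining inequality of $\mathfrak{F}_{\Phi}^w$), gives $\rho(u,v)\leqslant\Phi^*(\rho(e_1),\ldots,\rho(e_n))$. Since $\rho(e_i)\leqslant w(e_i)$ for each edge and $\Phi^*$ is monotone increasing, this is bounded by $\Phi^*(w(e_1),\ldots,w(e_n))=w_{\Phi}(P)$. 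Taking the infimum over all $P\in\mathcal P_{u,v}$ yields $\rho(u,v)\leqslant d^{w}_{\Phi}(u,v)$, that is $\rho\preceq d^{w}_{\Phi}$, which completes the argument.
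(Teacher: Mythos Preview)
The proposal is correct and follows essentially the same line as the paper's proof: near-optimal paths combined with condition~(\ref{pr2}) and continuity of $\Phi$ for the generalized triangle inequality, and Lemma~\ref{l1} together with monotonicity of $\Phi^*$ for maximality. The only differences are cosmetic: you verify symmetry, $d^{w}_{\Phi}(x,x)=0$, the edge bound, and the degenerate triangle-inequality cases explicitly (the paper leaves these implicit), and you argue maximality directly whereas the paper phrases the same computation as a proof by contradiction.
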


\begin{proof}
Let us verify the generalized triangle inequality
\begin{equation}\label{eq2.2}
d^{w}_{\Phi}(u,v)\leqslant \Phi(d^{w}_{\Phi}(u,p),d^{w}_{\Phi}(p,v))
\end{equation}
for different vertices $u,v,p \in V(G)$.
Let $\varepsilon$ be an  arbitrary positive number.
Then, by~(\ref{e3}) there exist paths $P_1 \in \mathcal{P}_{u,p}$ and $P_2 \in\mathcal{P}_{p,v}$ such that
\begin{equation}\label{eq2.3}
d^{w}_{\Phi}(u,p)+\varepsilon \geqslant w_{\Phi}(P_1)
\quad \mbox{and} \quad
d^{w}_{\Phi}(p,v)+\varepsilon \geqslant w_{\Phi}(P_2).
\end{equation}
The subgraph $G_1 $ of $G$ with $V(G_1)=V(P_1)\cup V(P_2)$ and $E(G_1)=E(P_1)\cup E(P_2)$ is connected.
Let $P_3$ be a path in $G_1$, connecting $u$ and $v$.
Using~(\ref{e3}), we have  $$d^{w}_{\Phi}(u,v) \leqslant w_{\Phi}(P_3).$$
Since $E(P_3)\subseteq E(P_1)\cup E(P_2)$ by~(\ref{e31}) and~(\ref{pr2}) we obtain
$$
w_{\Phi}(P_3) \leqslant \Phi(w_{\Phi}(P_1),w_{\Phi}(P_2)).
$$
Using the previous two inequalities,~(\ref{eq2.3}) and the monotonicity of $\Phi$, we get
$$
d^{w}_{\Phi}(u,v) \leqslant \Phi(d^{w}_{\Phi}(u,p)+\varepsilon,d^{w}_{\Phi}(p,v)+\varepsilon).
$$
Hence, letting $\varepsilon$ to zero, by the continuity of $\Phi$ we obtain~(\ref{eq2.2}).

It remains to verify that $d^{w}_{\Phi}$ is subdominant. Suppose there exist $\rho \in \mathfrak{F}_{\Phi}^w$ and $v_1, v_2 \in V(G)$ such that
\begin{equation}\label{eq2.9}
\rho(v_1,v_2)>d^w_{\Phi}(v_1,v_2).
\end{equation}
This inequality and~(\ref{e3}) imply the existence of a path  $P\in \mathcal{P}_{v_1,v_2}$ for which
\begin{equation}\label{eq2.10}
\rho(v_1,v_2)>w_{\Phi}(P).
\end{equation}
Note that $\rho(u,v)\leqslant w(\{u,v\})$ for every $\{u,v\}\in E(G)$. Consequently, in the case $\{v_1,v_2\}\in E(G)$ we have $w(v_1,v_2)>w_{\Phi}(P)$ and by inequality~(\ref{pr3}) the path $P$ does not contain $\{v_1,v_2\}$. Hence, in the pseudosemimetric space $(V(P),\rho)$ we can consider a cycle $C$ with
$$
V(C)=V(P), \quad E(C)=E(P)\cup\{\{v_1,v_2\}\}.
$$
As it was mentioned above we can associate with $(V(P),\rho)$ the weighted graph  $(G(V(P)),w_{\rho})$.
Since $w_{\rho}(\{u,v\})=\rho(u,v)\leqslant w(\{u,v\})$  by the monotonicity of functions $\Phi$, $\Phi^*$ and~(\ref{e31}) we have that $w_{\Phi}(P)\geqslant w_{\rho\Phi}(P)$. Hence, by~(\ref{eq2.10}) we have
$$
w_{\rho}(\{v_1,v_2\})=\rho(v_1,v_2)> w_{\rho\Phi}(P).
$$
Then the existence of such cycle in the pseudosemimetric space $(V(P),\rho)$ contradicts to Lemma~\ref{l1}. Thus, for every  $v_1,v_2 \in V(G)$ and  $\rho \in \mathfrak{F}_{\Phi}^w$ the inequality $\rho(v_1,v_2)\leqslant d_{\Phi}^w(v_1,v_2)$ holds, i.e., $d_{\Phi}^w$ is the greatest element of  $(\mathfrak{F}_{\Phi}^w,\preceq)$.
\end{proof}

\section{The existence of continuation}
If $(G,w)$ is a weighted graph with $\Phi$-pseudosemimetrizable $w$, then we shall denote by $\mathfrak{M}_{\Phi}^w$ the set of all $\Phi$-pseudosemimetrics $d\colon V(G)\times V(G)\to \mathbb{R}^+$ such that the equality~(\ref{e2}) holds  for all $\{u,v\}\in E(G)$. The proof of the following theorem is based on the proof of Proposition~2.1 from~\cite{DMV13} with the main difference that instead of the shortest-path pseudometric $d_w$ we use here the subdominant $\Phi$-pseudosemimetric $d_{\Phi}^w$.
\begin{theorem}\label{t1}
Let $(G,w)$ be a weighted graph and let $\Phi$ be a continuous {in both variables} triangle function. The following statements are equivalent.
\begin{itemize}
\item[$(i)$] The weight $w$ is $\Phi$-pseudosemimetrizable.
\item[$(ii)$] The equality
\begin{equation}\label{e4}
w(\{u,v\})=d_{\Phi}^w(u,v)
\end{equation}
holds for all $\{u,v\}\in E(G)$.
\item[$(iii)$] For every cycle $C\subseteq G$ and for every $e\in C$ the inequality
\begin{equation}\label{e5}
w(e)\leqslant w_{\Phi}(C\!\setminus \!e)
\end{equation}
holds, where $C\!\setminus \! e$ is a path obtained from $C$ by the removal of the edge $e$.
\end{itemize}
\end{theorem}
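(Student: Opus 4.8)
The plan is to prove the cyclic chain of implications $(i)\Rightarrow(iii)\Rightarrow(ii)\Rightarrow(i)$. First I would reduce to the case where $G$ is connected: every edge of $G$ lies inside a single connected component, so conditions $(ii)$ and $(iii)$ refer only to paths and cycles confined to components and are therefore component-local, while for disconnected $G$ a global extension can be assembled from the component extensions by assigning sufficiently large distances to pairs in distinct components (no edge, and hence no constraint coming from $w$, links them). For $(i)\Rightarrow(iii)$, suppose $w$ extends to a $\Phi$-pseudosemimetric $d$ and identify $(V(G),d)$ with the complete weighted graph $(G(V(G)),w_d)$ via~(\ref{eq2.4}). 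Any cycle $C\subseteq G$ is in particular a cycle in $G(V(G))$, so Lemma~\ref{l1} gives $w_d(e)\leqslant w_{\Phi}(C\!\setminus\!e)$ for every $e\in E(C)$. Since all edges of $C$ belong to $E(G)$, where $d$ agrees with $w$, we have $w_d(e)=w(e)$ and the $\Phi$-weight $w_{\Phi}(C\!\setminus\!e)$ computed from $w_d$ coincides with the one computed from $w$, which is precisely~(\ref{e5}).

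The implication $(iii)\Rightarrow(ii)$ is the heart of the argument. Fix an edge $\{u,v\}\in E(G)$. The single-edge path gives $w_{\Phi}(\{u,v\})=\Phi^*(w(\{u,v\}))=w(\{u,v\})$ by the case $n=1$ of~(\ref{comp}), so $d_{\Phi}^w(u,v)\leqslant w(\{u,v\})$ holds automatically. For the reverse inequality I would take an arbitrary $P\in\mathcal P_{u,v}$: if $P$ is the single edge $\{u,v\}$ then $w_{\Phi}(P)=w(\{u,v\})$, while otherwise $P$ has at least two edges, does not contain $\{u,v\}$, and $C:=P\cup\{\{u,v\}\}$ is a cycle with $C\!\setminus\!\{u,v\}=P$; then~(\ref{e5}) yields $w(\{u,v\})\leqslant w_{\Phi}(P)$. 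Taking the infimum over all $P\in\mathcal P_{u,v}$ gives $w(\{u,v\})\leqslant d_{\Phi}^w(u,v)$, and combined with the opposite inequality this establishes~(\ref{e4}).

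For $(ii)\Rightarrow(i)$, I would invoke Theorem~\ref{st2.1}: the function $d_{\Phi}^w$ is the subdominant $\Phi$-pseudosemimetric for $(G,w)$ and in particular is itself a $\Phi$-pseudosemimetric. By $(ii)$ it satisfies $d_{\Phi}^w(u,v)=w(\{u,v\})$ on every edge, so it is a continuation of $w$, whence $w$ is $\Phi$-pseudosemimetrizable. This is where the continuity hypothesis on $\Phi$ and the connectivity reduction from the first paragraph are actually used, since Theorem~\ref{st2.1} is stated for connected graphs.

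I expect the main obstacle to be the lower-bound step in $(iii)\Rightarrow(ii)$: one must check that $C=P\cup\{\{u,v\}\}$ is a genuine cycle, that is, that the edge $\{u,v\}$ is not already present in a path of length at least two, so that~(\ref{e5}) applies verbatim. The remaining care goes into the reduction to connected $G$ needed to legitimately apply Theorem~\ref{st2.1} in $(ii)\Rightarrow(i)$, and into verifying in $(i)\Rightarrow(iii)$ that passing from $w_d$ to $w$ leaves $w_{\Phi}(C\!\setminus\!e)$ unchanged.
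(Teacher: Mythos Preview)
Your argument is correct and uses the same tools as the paper (Lemma~\ref{l1} and Theorem~\ref{st2.1}); you simply run the cycle as $(i)\Rightarrow(iii)\Rightarrow(ii)\Rightarrow(i)$ whereas the paper does $(i)\Rightarrow(ii)\Rightarrow(iii)\Rightarrow(i)$, and your $(iii)\Rightarrow(ii)$ step is exactly the computation the paper places inside its $(iii)\Rightarrow(i)$. The one place that needs strengthening is the reduction to connected $G$: assigning a single ``sufficiently large'' constant between distinct components requires each component extension to have bounded diameter, and the paper explicitly allows infinite graphs, where this can fail (take an infinite path with unit weights under $\Phi(x,y)=x+y$: the subdominant extension has unbounded diameter, so no fixed $M$ satisfies $d_1(x,y)\leqslant\Phi(M,M)$ for all $x,y$ in that component). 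The paper sidesteps this by choosing a basepoint $v_i^*$ in each component $G_i$, adding bridge edges $\{v_i^*,v_{i_0}^*\}$ with arbitrary nonnegative weights to obtain a connected supergraph $G^*$ with the same set of cycles as $G$, and then applying the already-proved connected case to $(G^*,w^*)$; the resulting $d_\Phi^{w^*}$ is a $\Phi$-pseudosemimetric on all of $V(G)$ that agrees with $w$ on $E(G)$. Substituting this construction for your assembly step, the rest of your outline goes through as written.
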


\begin{proof}
{$(i)\Rightarrow(ii)$} Suppose that there exists a $\Phi$-pseudosemimetric $\rho$ on $V(G)$ such that $w(\{u,v\})=\rho(u,v)$ for each $\{u,v\}\in E(G)$. Then by Lemma~\ref{l1} for every sequence of vertices $u=v_1,\dots ,v_n=v$, $v_i\in V(G)$, $i=1,\dots,n$, we have
$$
w(\{u,v\})=\rho(u,v)\leqslant w_{\Phi}(P),
$$
where $P=(v_1,...,v_n)$ is a path connecting $u$ and $v$ in the pseudosemimetric space $(V(G),\rho)$. Consequently, for all paths $P\subseteq G$ joining  $u$ and $v$ in $G$ the inequality $w(\{u,v\})\leqslant w_{\Phi}(P)$ holds. Passing in the last inequality to the infimum over the set $\{w_{\Phi}(P)\colon P\in\mathcal P_{u,v}\}$ we obtain
\begin{equation*}
\rho(u,v)=w(\{u,v\})\leqslant d_{\Phi}^w(u,v),
\end{equation*}
see \eqref{e3}. The converse inequality $w(\{u,v\})\geqslant d_{\Phi}^w(u,v)$ holds because the path $(u=v_1,v_2=v)$ belongs to $\mathcal P_{u,v}$.

{$(ii)\Rightarrow(iii)$} Suppose statement (ii) holds. Let $C$ be an arbitrary cycle in $G$ and let $e=\{u,v\}\in E(C)$ be an edge in $C$. Consider the path $P=C\!\setminus\!e$ joining  the vertices $u$ and $v$. Using equalities \eqref{e4} and \eqref{e3} we have
\begin{equation*}
w(e)=d_{\Phi}^w(u,v)\leqslant w_{\Phi}(P),
\end{equation*}
which completes the proof of this implication.

{$(iii)\Rightarrow(i)$} Suppose (iii) is true. If $G$ is a connected graph, then it is sufficient to show that
$d_{\Phi}^w\in\mathfrak{M}_{\Phi}^w$. Let $\{u,v\}\in E(G)$. In the case where there
is no cycle $C\subseteq G$ such that $\{u,v\}\in E(C)$ the path
$(u=v_1,v_2=v)$ is the unique path joining $u$ and $v$. Hence, in
this case, equality \eqref{e4} follows from \eqref{e3}. Let
$P=(u=v_1,\dots,v_{k+1}=v)$ be an arbitrary $k$-path, $k\geqslant 2$,
joining $u$ and $v$. Then $C=(u=v_1,\dots,v_{k+1},v_{k+2}=u)$ is a
$k+1$-cycle with $\{u,v\}\in E(C)$. Hence, by \eqref{e5} we have the inequality $w(\{u,v\})\leqslant w_{\Phi}(P) $ for all
$P\in\mathcal P_{u,v}$. Consequently, $ w(\{u,v\})\leqslant d_{\Phi}^w(u,v)$. The converse inequality is trivial. Thus, if $G$ is connected, then $d_{\Phi}^w\in\mathfrak{M}_w$.

Consider now the case of disconnected graph $G$. Let
$\{G_i\colon i\in\mathcal I\}$ be the set of all components of $G$ and let
$\{v_i^*\colon i\in\mathcal I\}$ be the subset of $V(G)$ such that
$
v_i^*\in V(G_i)
$
for each $i\in\mathcal I$. We choose an index $i_0\in\mathcal I$ and
fix nonnegative constants $a_i,\ i\in\mathcal I\setminus \{i_0\}$.
Consider the supergraph $G^*$ of $G$ such that
$V(G^*)=V(G)$ and
$$
E(G^*)=E(G)\cup\{\{v_i^*,v_{i_0}^*\}\colon i\in\mathcal
I\setminus\{i_0\}\}.
$$
All edges $\{v^*_i,v^*_{i_0}\}$ are bridges of
$G^*$. Now we can extend the weight $w\colon E(G)\to\mathbb R^+$ to a
weight $w^*\colon E(G^*)\to\mathbb R^+$ by the rule:
$$
w^*(\{u,v\})=\begin{cases} w(\{u,v\})&\text{if }\{u,v\}\in E(G),\\
a_i&\text{if }\{u,v\}=\{v_i^*,v_{i_0}^*\},\ i\in\mathcal
I\setminus\{i_0\}.
\end{cases}
$$
Since $(G^*,w^*)$  is a connected graph with the same set of
cycles as in $G$, it follows from implication {$(iii)\Rightarrow(i)$} that $d_{\Phi}^{w^*}\in\mathfrak{M}_{\Phi}^{w^*}$. Since the equality $w(\{u,v\})=d_{\Phi}^{w^*}(u,v)$ holds for all $\{u,v\} \in E(G)$ we have that $d_{\Phi}^{w^*}\in\mathfrak{M}_{\Phi}^{w}$.
\end{proof}

\begin{corollary}\label{c2}
{Let $\Phi$ be a continuous in both variables triangle function and} let $(G,w)$ be a weighted graph with a $\Phi$-pseudosemimetrizable weight $w$. If $G$ is a connected graph, then  $d_{\Phi}^{w}$ is the greatest element of the partially ordered set $(\mathfrak{M}_{\Phi}^{w}, \preceq)$. Conversely, if $(\mathfrak{M}_{\Phi}^{w}, \preceq)$ has the greatest element, then $G$ is a connected graph.
\end{corollary}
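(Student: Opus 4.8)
The plan is to prove the two implications separately, leaning on the machinery already in place. For the forward direction, I would first record the inclusion $\mathfrak{M}_{\Phi}^{w}\subseteq\mathfrak{F}_{\Phi}^{w}$: indeed, any $d$ satisfying $d(u,v)=w(\{u,v\})$ on $E(G)$ in particular satisfies $d(u,v)\leqslant w(\{u,v\})$. Since $G$ is connected, Theorem~\ref{st2.1} guarantees that $d_{\Phi}^{w}$ is the greatest element of $(\mathfrak{F}_{\Phi}^{w},\preceq)$. The one thing to verify is that $d_{\Phi}^{w}$ itself lies in the smaller set $\mathfrak{M}_{\Phi}^{w}$, and this is exactly statement $(ii)$ of Theorem~\ref{t1}, which is available because $w$ is assumed $\Phi$-pseudosemimetrizable. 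Then for an arbitrary $d\in\mathfrak{M}_{\Phi}^{w}\subseteq\mathfrak{F}_{\Phi}^{w}$, the maximality of $d_{\Phi}^{w}$ in $\mathfrak{F}_{\Phi}^{w}$ forces $d\preceq d_{\Phi}^{w}$, so $d_{\Phi}^{w}$ is the greatest element of $\mathfrak{M}_{\Phi}^{w}$ as well.

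For the converse I would argue by contraposition: assuming $G$ is disconnected, I will exhibit a family of elements of $\mathfrak{M}_{\Phi}^{w}$ whose values on a single fixed pair of vertices are unbounded, which precludes a greatest element. Choose two distinct components $G_{i_0}$ and $G_{i_1}$ with representatives $v_{i_0}^{*}\in V(G_{i_0})$ and $v_{i_1}^{*}\in V(G_{i_1})$. Following the construction in the proof of the implication $(iii)\Rightarrow(i)$ of Theorem~\ref{t1}, for each choice of nonnegative constants $a_i$ I join every $v_i^{*}$ to $v_{i_0}^{*}$ by a bridge of weight $a_i$, obtaining a connected supergraph $(G^{*},w^{*})$ with the same cycles as $G$; since $w$ is $\Phi$-pseudosemimetrizable, condition $(iii)$ transfers to $G^{*}$, so $d_{\Phi}^{w^{*}}$ belongs to $\mathfrak{M}_{\Phi}^{w}$. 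Because the bridge $\{v_{i_1}^{*},v_{i_0}^{*}\}$ is the only edge leaving the component of $v_{i_1}^{*}$, every path joining these two vertices must traverse it, and property~(\ref{pr3}) (which gives $\Phi(a,b)\geqslant\max\{a,b\}$) ensures that any extra edges can only increase the $\Phi$-weight; hence $d_{\Phi}^{w^{*}}(v_{i_1}^{*},v_{i_0}^{*})=a_{i_1}$. Letting $a_{i_1}\to\infty$ yields elements of $\mathfrak{M}_{\Phi}^{w}$ with arbitrarily large value at $(v_{i_1}^{*},v_{i_0}^{*})$, contradicting the existence of a (necessarily finite-valued) greatest element.

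The forward direction is essentially bookkeeping once Theorems~\ref{st2.1} and~\ref{t1} are in hand, so the main obstacle lies in the converse. The delicate points there are, first, confirming that $d_{\Phi}^{w^{*}}$ genuinely belongs to $\mathfrak{M}_{\Phi}^{w}$ (that adjoining the bridges neither destroys $\Phi$-pseudosemimetrizability nor alters the weights on $E(G)$, which is precisely why the bridges must avoid creating new cycles), and second, pinning down the value $d_{\Phi}^{w^{*}}(v_{i_1}^{*},v_{i_0}^{*})$. For the latter it is enough to obtain the lower bound $d_{\Phi}^{w^{*}}(v_{i_1}^{*},v_{i_0}^{*})\geqslant a_{i_1}$, and property~(\ref{pr3}) is the essential ingredient that makes this estimate hold uniformly over all connecting paths.
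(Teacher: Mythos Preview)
Your argument is correct. The forward direction is exactly what the paper intends by ``follows directly from Theorem~\ref{t1} and Theorem~\ref{st2.1}'': the inclusion $\mathfrak{M}_{\Phi}^{w}\subseteq\mathfrak{F}_{\Phi}^{w}$ together with $d_{\Phi}^{w}\in\mathfrak{M}_{\Phi}^{w}$ (from $(i)\Rightarrow(ii)$ of Theorem~\ref{t1}) and subdominance in $\mathfrak{F}_{\Phi}^{w}$ (Theorem~\ref{st2.1}) gives the result.

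For the converse the paper says only that it is ``trivial'' and provides no argument, so there is nothing to compare against in detail. Your contrapositive using the bridge construction from the $(iii)\Rightarrow(i)$ part of Theorem~\ref{t1} is a perfectly good way to make this precise; the key observations---that the added bridges create no new cycles (so condition $(iii)$ persists and $d_{\Phi}^{w^{*}}\in\mathfrak{M}_{\Phi}^{w}$) and that property~(\ref{pr3}) forces $w_{\Phi}(P)\geqslant a_{i_1}$ for every path $P$ through the unique bridge leaving $G_{i_1}$---are exactly the right ones. The only remark is that for the contradiction you need merely the lower bound $d_{\Phi}^{w^{*}}(v_{i_1}^{*},v_{i_0}^{*})\geqslant a_{i_1}$, which you note; the exact equality is true but unnecessary.
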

\begin{proof}
The first part of this corollary follows directly from Theorem~\ref{t1} and Theorem~\ref{st2.1}. The second part is trivial.
\end{proof}

Note that all functions (i)-(v) listed in Section~\ref{TF} are continuous in both variables. The equivalence of Statements (i) and (iii) in Theorem~\ref{t1} immediately implies the following corollaries.

\begin{corollary}\label{c3}
Let $(G,w)$ be a weighted graph. Then the weight $w$ is pseudoultrametrizable, i.e., $\Phi(x,y)=\max\{x,y\}$, if and only if for every cycle $C\subseteq G$ there exist at least two different edges $e_1, e_2 \in E(C)$ such that
\begin{equation}\label{in2}
w(e_1)=w(e_2)=\max\limits_{e\in E(C)}w(e).
\end{equation}
\end{corollary}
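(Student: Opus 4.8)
The plan is to specialize the equivalence $(i)\Leftrightarrow(iii)$ of Theorem~\ref{t1} to the particular triangle function $\Phi(x,y)=\max\{x,y\}$, which is continuous in both variables, and then to rewrite condition~$(iii)$ as the stated condition on cycles. First I would observe that for this $\Phi$ the iterated function~(\ref{comp}) reduces to $\Phi^*(x_1,\dots,x_n)=\max\{x_1,\dots,x_n\}$, so that by~(\ref{e31}) the $\Phi$-weight of the path $C\setminus e$ equals the largest weight among the edges of $C$ other than $e$, namely
\begin{equation*}
w_{\Phi}(C\setminus e)=\max\{w(e')\colon e'\in E(C),\ e'\neq e\}.
\end{equation*}
Thus condition~$(iii)$ becomes: for every cycle $C$ and every $e\in E(C)$,
\begin{equation*}
w(e)\leqslant\max\{w(e')\colon e'\in E(C),\ e'\neq e\}.
\end{equation*}

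It then remains to prove that, for a fixed cycle $C$, this edge-wise inequality holds for all $e\in E(C)$ if and only if the maximum $M:=\max_{e\in E(C)}w(e)$ is attained by at least two distinct edges. For the forward direction I would argue by contraposition: if $M$ is attained by a unique edge $e_0$, then every other edge has weight strictly less than $M=w(e_0)$, so that $\max_{e'\neq e_0}w(e')<w(e_0)$ and the inequality fails at $e=e_0$. For the converse, suppose $e_1\neq e_2$ satisfy $w(e_1)=w(e_2)=M$; given any $e\in E(C)$, at least one of $e_1,e_2$ differs from $e$, whence $\max_{e'\neq e}w(e')\geqslant M\geqslant w(e)$, which establishes the inequality. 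Combining this equivalence with the already proved equivalence $(i)\Leftrightarrow(iii)$ of Theorem~\ref{t1} yields the corollary.

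Since the statement is presented as an immediate consequence of Theorem~\ref{t1}, I do not expect any serious obstacle here. The only points requiring a little care are the routine but necessary verification that $\Phi^*=\max$ for this choice of $\Phi$, and the clean handling of the ``at least two edges'' quantifier in both directions of the final logical equivalence, where one must be careful to note that having two edges attaining $M$ is exactly what guarantees that for \emph{each} individual edge $e$ some other edge of equal maximal weight survives in $C\setminus e$.
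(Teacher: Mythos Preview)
Your proposal is correct and follows essentially the same approach as the paper: specialize condition~$(iii)$ of Theorem~\ref{t1} to $\Phi=\max$, observe that $w_{\Phi}(C\setminus e)=\max_{e'\neq e}w(e')$, and then argue that the resulting edge-wise inequality is equivalent to the maximum being attained at least twice. The paper's own proof is terser---it handles only the forward direction explicitly by plugging in an edge $e_m$ of maximal weight and asserts the rest---so your fuller treatment of both directions is, if anything, cleaner.
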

\begin{proof}
Setting in~(\ref{e5}) $\Phi(x,y)=\max\{x,y\}$, we get
$$
w(e)\leqslant \max\limits_{\tilde{e}\in C\setminus e}w(\tilde{e})
$$
for every $e\in C$.
Taking instead of $e$ the edge $e_m$ of the maximal weight in the cycle $C$ we get
$$
w(e_m)=\max\limits_{\tilde{e}\in C}w(\tilde e)\leqslant \max\limits_{\tilde{e}\in C\setminus e_m}w(\tilde{e})
$$
This means that the edge of maximal weight is represented at least twice in the cycle $C$ which is equivalent to~(\ref{in2}).
\end{proof}
Note that Corollary~\ref{c3} is exactly Theorem~2 from~\cite{DP13}.

\begin{corollary}\label{c4}
Let $(G,w)$ be a weighted graph. Then the weight $w$ is $\Phi$-pseudosemimetrizable with $\Phi(x,y)=\varphi^{-1}(\varphi(x)+\varphi(y))$, where $\varphi\colon [0,\infty)\to [0,\infty)$ is a homeomorphism, if and only if
for every cycle $C\subseteq G$ and every $e\in C$ the inequality
\begin{equation*}
 w(e)\leqslant \varphi^{-1}\left(\sum\limits_{\tilde e\in C\setminus e}\varphi(w(\tilde e))\right)
\end{equation*}
holds.
\end{corollary}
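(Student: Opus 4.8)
The plan is to obtain Corollary~\ref{c4} as the specialization of the equivalence $(i)\Leftrightarrow(iii)$ in Theorem~\ref{t1} to the triangle function (v). First I would check that Theorem~\ref{t1} is applicable to $\Phi(x,y)=\varphi^{-1}(\varphi(x)+\varphi(y))$: the proposition establishing that triangle function (v) satisfies conditions (1)--(3) already does the work on the algebraic side, and $\Phi$ is continuous in both variables because $\varphi$ and its inverse $\varphi^{-1}$ are continuous (being a homeomorphism of $[0,\infty)$ onto itself). Hence, for this particular $\Phi$, the weight $w$ is $\Phi$-pseudosemimetrizable if and only if condition (iii) holds, i.e., $w(e)\leqslant w_{\Phi}(C\!\setminus\!e)$ for every cycle $C\subseteq G$ and every $e\in E(C)$.

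The only remaining step is to make $w_{\Phi}(C\!\setminus\!e)$ explicit for this choice of $\Phi$. As computed in part (2) of the proof that triangle function (v) satisfies conditions (1)--(3), one has $\Phi^*(a_1,\ldots,a_n)=\varphi^{-1}(\varphi(a_1)+\cdots+\varphi(a_n))$. Writing $\tilde e_1,\ldots,\tilde e_m$ for the edges of the path $C\!\setminus\!e$ and applying the definition~(\ref{e31}) of the $\Phi$-weight, I would obtain
\begin{equation*}
w_{\Phi}(C\!\setminus\!e)=\Phi^*\bigl(w(\tilde e_1),\ldots,w(\tilde e_m)\bigr)=\varphi^{-1}\!\left(\sum_{\tilde e\in C\setminus e}\varphi(w(\tilde e))\right).
\end{equation*}
Substituting this into the inequality $w(e)\leqslant w_{\Phi}(C\!\setminus\!e)$ furnished by condition (iii) yields precisely the inequality in the statement, and conversely, so the equivalence transfers verbatim.

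There is essentially no hard step here; the corollary is a direct translation of the abstract criterion of Theorem~\ref{t1} into the concrete form dictated by the triangle function (v), and the bulk of the content was already secured by verifying conditions (1)--(3) and continuity for this $\Phi$. The only point that warrants a remark is the degenerate branch of~(\ref{e31}) corresponding to an empty edge set, but since $C$ is a cycle in a simple graph it carries at least three edges, so $C\!\setminus\!e$ always has at least two edges and the displayed sum is never empty; thus the nontrivial branch of~(\ref{e31}) always applies and no special case need be treated.
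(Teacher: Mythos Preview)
Your proposal is correct and follows exactly the same approach as the paper: the paper's proof consists of the single sentence ``This equivalence can be obtained by the direct substitution of the corresponding function $\Phi$ into~(\ref{e5}),'' and your argument is precisely that substitution, spelled out in full detail.
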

\begin{proof}
This equivalence can be obtained by the direct substitution of the corresponding function $\Phi$ into~(\ref{e5}).
\end{proof}

\begin{remark}
Setting in Corollary~\ref{c4}  $\varphi(t)=t^p$, $p>0$, we get that the weight $w$ is $\Phi$-pseudosemimetrizable with $\Phi(x,y)=(x^p+y^p)^{\frac{1}{p}}$ if and only if for every cycle $C\subseteq G$ and every $e\in C$ the inequality
\begin{equation}\label{ce51}
 w(e)\leqslant \left(\sum\limits_{\tilde e\in C\setminus e}w^p(\tilde e)\right)^{\frac{1}{p}}
\end{equation}
holds. Setting in~(\ref{ce51}) $p=1$ and taking instead of $e$ an edge of the maximal weight in the cycle $C$, we get that the weight $w$ is pseudometrizable, i.e., $\Phi(x,y)=x+y$, if and only if for every cycle $C\subseteq G$ the inequality
\begin{equation*}
2\max\limits_{e\in E(C)} w(e)\leqslant \sum\limits_{e\in C}w(e)
\end{equation*}
holds. Note that this equivalence is exactly Proposition~2.1 from~\cite{DMV13}.
\end{remark}

\begin{corollary}
The weight $w$ has a continuation to a pseudomultiplicative metric, i.e., $\Phi(x,y)=xy$, if and only if for every cycle $C\subseteq G$ and every $e\in C$ the inequality
\begin{equation*}
 w(e)\leqslant \prod\limits_{\tilde e\in C\setminus e}w(\tilde e)
\end{equation*}
holds and $w({e})\geqslant 1$ for every ${e}\in E(G)$.
\end{corollary}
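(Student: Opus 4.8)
The plan is to deduce this corollary from Theorem~\ref{t1} by specializing the triangle function to $\Phi(x,y)=xy$, treating the normalization $w(e)\geqslant 1$ and the diagonal value $d(x,x)=1$ as the only genuinely new ingredients. First I would substitute $\Phi(x,y)=xy$ into~(\ref{comp}), obtaining $\Phi^*(x_1,\dots,x_n)=x_1\cdots x_n$, so that the $\Phi$-weight of a path equals the product of its edge weights. With this substitution the cycle inequality~(\ref{e5}) reads exactly $w(e)\leqslant\prod_{\tilde e\in C\setminus e}w(\tilde e)$, matching the stated condition; the equivalence of Statements~$(i)$ and~$(iii)$ in Theorem~\ref{t1} then supplies the cycle part of the claim, once we justify applying that theorem here. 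Note that $xy$ is continuous in both variables, so the continuity hypothesis of Theorem~\ref{t1} is met.

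The subtlety, flagged in the Remark on pseudomultiplicative metric spaces, is twofold: the product $xy$ satisfies conditions~(1)--(3) only on $[1,\infty)$ (for example $\Phi(a,b)=ab\geqslant\max\{a,b\}$ needs $a,b\geqslant 1$), and a pseudomultiplicative metric obeys $d(x,x)=1$ rather than the axiom $d(x,x)=0$ used throughout. The hypothesis $w(e)\geqslant 1$ resolves the first point. If a continuation $d$ exists, then $d$ takes values in $[1,\infty)$, so $w(\{u,v\})=d(u,v)\geqslant 1$ for every edge, giving necessity. Conversely, if $w(e)\geqslant 1$ for all edges, then every path product $w_{\Phi}(P)=\prod_{\tilde e\in P}w(\tilde e)$ is at least $1$, so all the quantities to which conditions~(\ref{pr2}) and~(\ref{pr3}) are applied in the proofs of Theorem~\ref{st2.1} and Theorem~\ref{t1} stay in the admissible range $[1,\infty)$.

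For the diagonal mismatch I would appeal to the Remark directly: the cycle and path arguments of Lemma~\ref{l1}, Theorem~\ref{st2.1} and Theorem~\ref{t1} reference only off-diagonal edge and path values of the weight, so replacing $d(x,x)=0$ by $d(x,x)=1$ does not affect them. Concretely, for sufficiency I would set $d=d_{\Phi}^{w}$ off the diagonal---extending across components as in the disconnected case of Theorem~\ref{t1}, now choosing the bridge weights $a_i\geqslant 1$---and put $d(x,x)=1$. The bound $w_{\Phi}(P)\geqslant 1$ yields $d(u,v)\geqslant 1$ for distinct $u,v$, the multiplicative inequality $(i_3)$ is precisely~(\ref{eq2.2}) with $\Phi(x,y)=xy$, and equality~(\ref{e4}) shows that $d$ extends $w$, producing the desired pseudomultiplicative metric.

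The main obstacle I expect is not any single computation but the bookkeeping required to confirm that every invocation of conditions~(1)--(3) in the inherited proofs indeed remains within $[1,\infty)$; once the hypothesis $w(e)\geqslant 1$ has been propagated to all path products, what is left is the same direct substitution as in Corollaries~\ref{c3} and~\ref{c4}.
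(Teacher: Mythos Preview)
Your proposal is correct and follows essentially the same approach as the paper: direct substitution of $\Phi(x,y)=xy$ into~(\ref{e5}) via Theorem~\ref{t1}, together with the observation that $w(e)\geqslant 1$ is necessary and that $\Phi(x,y)=xy$ satisfies conditions~(\ref{pr2}) and~(\ref{pr3}) only for arguments in $[1,\infty)$. In fact you supply more detail than the paper's own proof, which is a two-sentence sketch; your explicit handling of the diagonal value $d(x,x)=1$, the choice of bridge weights $a_i\geqslant 1$ in the disconnected case, and the verification that all path products remain in $[1,\infty)$ are all left implicit there.
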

\begin{proof}
This equivalence can be obtained by the direct substitution of the corresponding function $\Phi$ into~(\ref{e5}). Note also that the condition $w(e)\geqslant 1$ for every $e\in E(G)$ is a necessary condition for the continuation of the weight to a pseudomultiplicative metric. Moreover, the function $\Phi(x,y)=xy$ satisfies conditions~(\ref{pr2}) and~(\ref{pr3}) only if its arguments $x,y\geqslant 1$.
\end{proof}

\textbf{Acknowledgements.} This work was partially supported by a grant from the Simons Foundation (Award 1160640, Presidential Discretionary-Ukraine Support Grants, E. Petrov).

\end{document}